\newtheorem{thm}{Theorem}[section]
\newtheorem{lem}[thm]{Lemma}
\newtheorem{pro}[thm]{Proposition}
\newenvironment{ack}{\noindent{\bf Acknowledgments}}
\newcommand{\vol}{{\rm vol}}
\newcommand{\cs}{{\rm cs}}
\newcommand{\li}{{\rm Li}_2}
\newcommand{\modulo}{~~({\rm mod}~\pi^2)}
\begin{document}

\title{Optimistic limit of the colored Jones polynomial and the existence of a solution
}
\author{\sc Jinseok Cho}
\maketitle
\begin{abstract}
For the potential function of a link diagram induced by the optimistic limit of the colored Jones polynomial,
we show the existence of a solution of the hyperbolicity equations by directly constructing it.
This construction is based on the shadow-coloring of the conjugation quandle induced by 
a boundary-parabolic representation $\rho:\pi_1(L)\rightarrow{\rm PSL}(2,\mathbb{C})$.
This gives us a very simple and combinatorial method to calculate the complex volume of $\rho$.

\end{abstract}

\section{Introduction}\label{sec1}

The optimistic limit of Kashaev invariant was naturally appeared in \cite{Kashaev97} when the volume conjecture first introduced.
It can be considered as an informal way to predict the actual limit of the Kashaev invariant using a potential function, 
and it has been widely considered the actual limit by general Physicists.
After the appearance, many works have been done to provide a mathematically rigorous definition in 
\cite{Cho09b}, \cite{Yokota10}, \cite{Cho13a} and \cite{Cho14a}.

Let $L$ be a link. 
The author with several collaborators defined a potential function combinatorially from the link diagram in \cite{Cho13a} 
and showed that the evaluation of the function
at a saddle point becomes complex volume of certain representation. Furthermore, it was shown in \cite{Cho14a} that,
if we modify the potential function slightly using the information of 
a given boundary-parabolic representation\footnote{
Boundary-parabolic representation means the images of the meridians and the longitudes of the cusp tori are all parabolic elements
of ${\rm PSL}(2,\mathbb{C})$.} 
$\rho:\pi_1(L)\rightarrow{\rm PSL}(2,\mathbb{C})$,
then the set of hyperbolicity equations always have the solution which induces $\rho$ up to conjugate.
This solution was directly constructed from the shadow-coloring of $\mathcal{P}$ induced by $\rho$,
where $\mathcal{P}$ is the conjugation quandle consists of parabolic elements of ${\rm PSL}(2,\mathbb{C})$.

On the other hand, the colored Jones polynomial was shown to be a generalization of the Kashaev invariant in \cite{Murakami01a},
and the optimistic limit of the colored Jones polynomial was also developed in 
\cite{Thurston99}, \cite{Cho10a}, \cite{Cho13b} and \cite{Cho13c}.
Especially, following the idea of \cite{Cho13a}, another potential function $W(w_1,\ldots,w_n)$ 
from the optimistic limit of the colored Jones polynomial
was defined in \cite{Cho13c} combinatorially from the link diagram. At first, we fix the link diagram\footnote{
We always assume the diagram does not contain a trivial knot component which has only over-crossings or under-crossings or no crossing.
If it happens, then we change the diagram of the trivial component slightly.
For example, applying Reidemeister second move to make different types of crossings 
or Reidemeister first move to add a kink is good enough. 
This assumption is necessary to guarantee that
the five-term triangulation becomes a topological triangulation of $\mathbb{S}^3\backslash(L\cup\{\text{two points}\})$.} $D$. 
Then we assign variables $w_1,\ldots,w_n$ to regions
of the diagram and define a potential function of a crossing $j$ as in Figure \ref{fig01}.

\begin{figure}[h]
\setlength{\unitlength}{0.4cm}
\subfigure[Positive crossing]{
  \begin{picture}(35,6)\thicklines
    \put(6,5){\vector(-1,-1){4}}
    \put(2,5){\line(1,-1){1.8}}
    \put(4.2,2.8){\vector(1,-1){1.8}}
    \put(3.5,1){$w_a$}
    \put(5.5,3){$w_b$}
    \put(3.5,4.5){$w_c$}
    \put(1.5,3){$w_d$}
    \put(8,3){$\longrightarrow$}
    \put(11,4){$W_j:=-\li(\frac{w_c}{w_b})-\li(\frac{w_c}{w_d})+\li(\frac{w_a w_c}{w_b w_d})+\li(\frac{w_b}{w_a})+\li(\frac{w_d}{w_a})$}
    \put(15,2){$-\frac{\pi^2}{6}+\log\frac{w_b}{w_a}\log\frac{w_d}{w_a}$}
    \put(3.6,2){$j$}
  \end{picture}}\\
\subfigure[Negative crossing]{
  \begin{picture}(35,6)\thicklines
    \put(2,5){\vector(1,-1){4}}
    \put(6,5){\line(-1,-1){1.8}}
    \put(3.8,2.8){\vector(-1,-1){1.8}}
    \put(3.5,1){$w_a$}
    \put(5.5,3){$w_b$}
    \put(3.5,4.5){$w_c$}
    \put(1.5,3){$w_d$}
    \put(8,3){$\longrightarrow$}
    \put(11,4){$W_j:=\li(\frac{w_c}{w_b})+\li(\frac{w_c}{w_d})-\li(\frac{w_a w_c}{w_b w_d})-\li(\frac{w_b}{w_a})-\li(\frac{w_d}{w_a})$}
    \put(15,2){$+\frac{\pi^2}{6}-\log\frac{w_b}{w_a}\log\frac{w_d}{w_a}$}
        \put(3.6,2){$j$}
  \end{picture}}
  \caption{Potential function of the crossing $j$}\label{fig01}
\end{figure}

Then the potential function of $D$ is defined by
$$W(w_1,\ldots,w_n):=\sum_{j\text{ : crossings}}W_j,$$
and we modify it to
\begin{equation*}
W_0(w_1,\ldots,w_n):=W(w_1,\ldots,w_n)-\sum_{k=1}^n \left(w_k\frac{\partial W}{\partial w_k}\right)\log w_k.
\end{equation*}

Also, from the potential function $W(w_1,\ldots,w_n)$, we define a set of equations
\begin{equation*}
\mathcal{I}:=\left\{\left.\exp\left(w_k\frac{\partial W}{\partial w_k}\right)=1\right|k=1,\ldots,n\right\}.
\end{equation*}
Then, from Proposition 1.1 of \cite{Cho13c}, $\mathcal{I}$ becomes the set of hyperbolicity equations of
the five-term triangulation of $\mathbb{S}^3\backslash (L\cup\{\text{two points}\})$ {in Figure \ref{fig06}}. Here, hyperbolicity equations
are the equations that determine the complete hyperbolic structure of the triangulation, which consist of
gluing equations of edges and completeness condition. According to Yoshida's construction in Section 4.5 of \cite{Tillmann13},
a solution $\bold w=(w_1,\ldots,w_n)$ of $\mathcal{I}$ determines the boundary-parabolic representation
\begin{equation*}
\rho_{\bold{w}}:\pi_1(\mathbb{S}^3\backslash (L\cup\{\text{two points}\}))=\pi_1(\mathbb{S}^3\backslash L)\longrightarrow{\rm PSL}(2,\mathbb{C}).
\end{equation*} 

Theorem 1.2 of \cite{Cho13c} shows that, for the solution $\bold w$ of $\mathcal{I}$,
\begin{equation}\label{W1}
W_0(\bold{w})\equiv i(\vol(\rho_{\bold w})+i\,\cs(\rho_{\bold w}))\modulo,
\end{equation}
where $\vol(\rho_{\bold w})$ and $\cs(\rho_{\bold w})$ are the volume and the Chern-Simons invariant of the representation
$\rho_{\bold w}$ defined in \cite{Zickert09}, respectively. 
We call $\vol(\rho_{\bold w})+i\,\cs(\rho_{\bold w})$ {\it the complex volume of $\rho_{\bold w}$}.

Although the potential function in \cite{Cho13c} determines the complex volume very nicely, there are two major problems. 
\begin{enumerate}
  \item When $\mathcal{I}$ has no solution, we cannot do anything with the potential function $W$.
  \item We do not know whether the set $\{\rho_{\bold w}\,\vert\, {\bold w}\text{ is a solution of }\mathcal{I}\}$ contains 
     all possible boundary-parabolic representations $\rho:\pi_1(L)\rightarrow{\rm PSL}(2,\mathbb{C})$.
\end{enumerate}

In the case of the optimistic limit of Kashaev invariant, we solved these problems in \cite{Cho14a} by using the shadow-coloring of 
the conjugation quandle $\mathcal{P}$ defined in \cite{Kabaya14}.
The purpose of this article is to solve the above problems 
by constructing a solution of $\mathcal{I}$ using the same method.

\begin{thm}\label{thm1}
For any boundary-parabolic representation $\rho:\pi_1(L)\rightarrow{\rm PSL}(2,\mathbb{C})$ and any link diagram $D$ of $L$,
there exists the solution $\bold w^{(0)}$ of $\mathcal{I}$ satisfying
$\rho_{\bold w^{(0)}}=\rho,$
up to conjugate.
\end{thm}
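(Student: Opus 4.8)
The plan is to imitate \cite{Cho14a}: produce a solution $\bold w^{(0)}$ of $\mathcal{I}$ directly from the shadow-coloring of the conjugation quandle $\mathcal{P}$ determined by $\rho$, and then check that Yoshida's recipe applied to $\bold w^{(0)}$ returns $\rho$. First I would set up the coloring. Recall that a parabolic element of ${\rm PSL}(2,\mathbb{C})$ is specified, up to the obvious ambiguity, by a nonzero vector of $\mathbb{C}^2$ (a lift of its fixed point on $\partial\mathbb{H}^3=\mathbb{CP}^1$, scaled so as to record the translation part), and that under this parametrization $\mathcal{P}$ with the conjugation operation is the quandle used in \cite{Kabaya14}. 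From the Wirtinger presentation of $\pi_1(L)$ read off from $D$, color each arc of $D$ by the image under $\rho$ of the corresponding meridian; this is a $\mathcal{P}$-coloring of the arcs precisely because $\rho$ respects the Wirtinger relation at every crossing. Then extend it to a shadow-coloring: fix a base region $R_0$, assign to it an arbitrary parabolic element $g_0\in\mathcal{P}$, and propagate to every region by applying the quandle (i.e.\ conjugation) operation each time an arc is crossed; well-definedness of the resulting region-coloring reduces, around each crossing, to the same Wirtinger relation. I would choose $g_0$ generically, so that all the quantities introduced below are nonzero and all arguments of $\li$ stay in $\mathbb{C}\setminus\{0,1\}$; the remaining degenerate cases are handled exactly as in \cite{Cho14a}.

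Next I would write down $\bold w^{(0)}$ explicitly. From the shadow-coloring each region $R_k$ carries a vector $p_k\in\mathbb{C}^2$, and I set $w_k^{(0)}$ equal to an appropriate ratio of the determinants $\det(p_i,p_j)$ --- the exact normalization being forced, as in \cite{Cho14a}, by the demand that at every crossing $j$ of Figure \ref{fig01} the five arguments $\frac{w_c}{w_b}$, $\frac{w_c}{w_d}$, $\frac{w_aw_c}{w_bw_d}$, $\frac{w_b}{w_a}$, $\frac{w_d}{w_a}$ turn into exactly the cross-ratios of the ideal points on $\mathbb{CP}^1$ that the shadow-coloring assigns near $j$. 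In other words, with shape parameters $\bold w^{(0)}$ the five-term triangulation at each crossing is realized as the genuine (possibly degenerate) ideal triangulation developed by $\rho$.

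The core of the argument, and what I expect to be the main obstacle, is the verification that $\bold w^{(0)}$ solves $\mathcal{I}$. For a fixed region index $k$, the quantity $w_k\frac{\partial W}{\partial w_k}$ is the sum, over the crossings incident to $R_k$, of the logarithmic terms read off from Figure \ref{fig01}; after substituting $\bold w^{(0)}$ each summand becomes a definite branch of the logarithm of a shape parameter, so that $\exp\big(w_k\frac{\partial W}{\partial w_k}\big)=1$ is precisely the gluing/completeness equation of the five-term triangulation along the edge(s) coming from $R_k$. Because the shadow-coloring attaches globally consistent ideal points and the quandle operation is honest conjugation, the product of these cross-ratios telescopes to $1$; the real work --- the part where I would have to reproduce, step by step, the sign and branch bookkeeping of \cite{Cho14a} --- is to match the combinatorial logarithmic terms of $W$ with this cross-ratio cocycle on the nose, so that the product is exactly $1$ rather than merely $1$ up to a root of unity. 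The generic choice of $g_0$ is used here to keep every term defined.

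Finally, the identification $\rho_{\bold w^{(0)}}=\rho$ up to conjugation is formal. By Yoshida's construction (Section 4.5 of \cite{Tillmann13}), $\rho_{\bold w^{(0)}}$ is the holonomy of the developing map of the triangulation with shapes $\bold w^{(0)}$, and by the previous step that developing map is the one supplied by the shadow-coloring, whose meridian holonomies are by construction the arc-colors $\rho(\text{meridian})$; since these generate $\pi_1(L)$, the two representations agree, the only indeterminacy being the overall conjugation coming from the choice of $g_0$. Together with \eqref{W1}, this gives the promised combinatorial computation of the complex volume of $\rho$.
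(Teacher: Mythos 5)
Your proposal follows essentially the same route as the paper: color arcs by $\rho$, choose a generic region-coloring, define $\bold w^{(0)}$ by determinants so that the shape parameters become the cross-ratios of the developed ideal points, and recover $\rho$ via Yoshida's construction and the face-pairing maps. The only differences are cosmetic: the paper's formula is the single determinant $w_k^{(0)}=\det(p,s_k)$ with one auxiliary parabolic $p$ (rather than a ratio of determinants), and the ``sign and branch bookkeeping'' you flag as the main obstacle is already packaged in Proposition 1.1 of \cite{Cho13c}, which identifies $\mathcal{I}$ with the gluing/completeness equations, so only the cross-ratio computation remains.
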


The exact formula of $\bold w^{(0)}$ is in (\ref{main}), which is amazingly simple.
Using this solution, we define 
{\it the colored Jones version of the optimistic limit of} $\rho$ by $W_0({\bold w^{(0)}})$.
Then, from (\ref{W1}), the optimistic limit is always the complex volume of $\rho$.
The author believes calculating this optimistic limit is the most convenient method to obtain
the complex volume of a given boundary-parabolic representation
because everything is combinatorially obtained from the link diagram.

Note that, the potential function and the triangulation of the Kashaev version in \cite{Cho13a} was slightly modified in \cite{Cho14a}
according to the information of the representation $\rho$ so as to guarantee the existence of a solution. 
However, we do not need any modification of the colored Jones version in \cite{Cho13c}, which is a major advantage of this article.
{ Actually, if a link diagram contains Figure \ref{nosol} or a kink, then the unmodified Kashaev version does not have
solutions. (See \cite{Cho13c} for the proof. The modification needs extra information other than the link diagram.)
Due to the existence of a solution of the colored Jones version for any diagram and any $\rho$, 
several combinatorial applications are possible. 
See \cite{Cho15b} and \cite{Cho15a} for those applications.

\begin{figure}[h]
\begin{center}\includegraphics[scale=1]{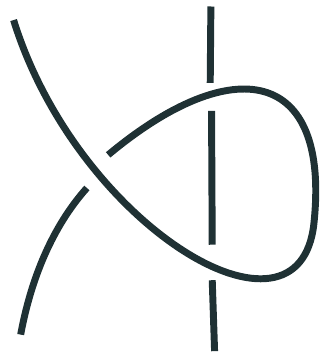}\end{center}
\caption{Example that Kashaev version does not have a solution}\label{nosol}
\end{figure}}

\section{Construction of the solution}
\subsection{Reviews on shadow-coloring}

This section is a summary of definitions and properties we need. 
For complete descriptions, see {\cite{Kabaya14}, especially Section 5}.

Let $\mathcal{P}$ be the set of parabolic elements of ${\rm PSL}(2,\mathbb{C})={\rm Isom^+}(\mathbb{H}^3)$.
We identify $\mathbb{C}^2\backslash\{0\}/\pm$ with $\mathcal{P}$ by
$$\left(\begin{array}{c}\alpha \\\beta\end{array}\right)  
  \longleftrightarrow\left(\begin{array}{cc}1+\alpha\beta & -\alpha^2 \\ \beta^2& 1-\alpha\beta\end{array}\right),$$
and define operation $*$ by
\begin{eqnarray*}
  \left(\begin{array}{c}\alpha \\\beta\end{array}\right)*\left(\begin{array}{c}\gamma \\ \delta\end{array}\right)
  =\left(\begin{array}{cc}1+\gamma\delta & -\gamma^2 \\ \delta^2& 1-\gamma\delta\end{array}\right)
  \left(\begin{array}{c}\alpha \\\beta\end{array}\right)\in\mathcal{P},
\end{eqnarray*}
where this operation is actually induced by the conjugation as follows:
$$  \left(\begin{array}{c}\alpha \\\beta\end{array}\right)*\left(\begin{array}{c}\gamma \\ \delta\end{array}\right)\in\mathcal{P}
\longleftrightarrow \left(\begin{array}{c}\gamma \\\delta\end{array}\right)
  \left(\begin{array}{c}\alpha \\ \beta\end{array}\right)
  \left(\begin{array}{c}\gamma \\\delta\end{array}\right)^{-1}\in{\rm PSL}(2,\mathbb{C}).$$
The inverse operation $*^{-1}$ is expressed by
$$\left(\begin{array}{c}\alpha \\\beta\end{array}\right)*^{-1}\left(\begin{array}{c}\gamma \\ \delta\end{array}\right)
  =\left(\begin{array}{cc}1-\gamma\delta & \gamma^2 \\ -\delta^2& 1+\gamma\delta\end{array}\right)
  \left(\begin{array}{c}\alpha \\\beta\end{array}\right)\in\mathcal{P},$$
and $(\mathcal{P},*)$ becomes a conjugation quandle. 
Here, quandle means, for any $a,b,c\in\mathcal{P}$, the map $*b:a\mapsto a*b$ is bijective and
$$a*a=a, ~(a*b)*c=(a*c)*(b*c)$$
hold.

We define {\it the Hopf map} $h:\mathcal{P}\rightarrow\mathbb{CP}^1=\mathbb{C}\cup\{\infty\}$ by
$$\left(\begin{array}{c}\alpha \\\beta\end{array}\right)\mapsto \frac{\alpha}{\beta}.$$

For an oriented link diagram $D$ of $L$ and the boundary-parabolic representation $\rho$, we assign {\it arc-colors}
$a_1,\ldots,a_n\in\mathcal{P}$ to arcs of $D$ so that each $a_k$ is the image of the meridian around the arc
under the representation $\rho$. Note that, in Figure \ref{fig02}, we have 
\begin{equation}\label{ope}
     a_m=a_l*a_k.
\end{equation}

\begin{figure}[h]
\centering  \setlength{\unitlength}{0.6cm}\thicklines
\begin{picture}(4.5,5.5)(1.5,0.5)  
    \put(6,5){\vector(-1,-1){4}}
    \put(2,5){\line(1,-1){1.8}}
    \put(4.2,2.8){\line(1,-1){1.8}}
    \put(6.2,5.2){$a_k$}
    \put(1.5,0.5){$a_k$}
    \put(1.3,5.2){$a_l$}
    \put(6.2,0.5){$a_m$}
  \end{picture}
  \caption{Arc-coloring}\label{fig02}
\end{figure}
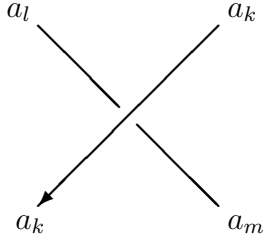

We also assign {\it region-colors} $s_1,\ldots,s_m\in\mathcal{P}$ to regions of $D$ satisfying the rule in Figure \ref{fig03}.
Note that, if an arc-coloring is given, then a choice of one region-color determines all the other region-colors.

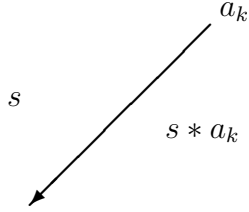
\begin{figure}[h]
\centering  \setlength{\unitlength}{0.6cm}\thicklines
\begin{picture}(6,5)  
    \put(6,4){\vector(-1,-1){4}}
    \put(1.5,2.2){$s$}
    \put(5,1.5){$s*a_k$}
    \put(6.2,4.2){$a_k$}
  \end{picture}
  \caption{Region-coloring}\label{fig03}
\end{figure}

\begin{lem}\label{lem} 
Consider the arc-coloring induced by the boundary-parabolic representation $\rho:\pi_1(L)\rightarrow {\rm PSL}(2,\mathbb{C})$.
Then, for any triple $(a_k,s,s*a_k)$ of an arc-color $a_k$ and its surrounding region-colors $s, s*a_k$ as in Figure \ref{fig03},
there exists a region-coloring satisfying
\begin{equation*}
  h(a_k)\neq h(s)\neq h(s*a_k)\neq h(a_k).
\end{equation*}
\end{lem}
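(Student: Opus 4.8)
My plan is to reduce the lemma to an elementary counting argument on $\mathbb{CP}^1$. The key observation is that the chain of inequalities $h(a_k)\neq h(s)\neq h(s*a_k)\neq h(a_k)$ is equivalent to the single inequality $h(s)\neq h(a_k)$. To see this I would write $N_k\in{\rm PSL}(2,\mathbb{C})$ for the matrix associated to $a_k$; by definition the quandle operation is $s*a_k=N_k\cdot s$ (left multiplication on column vectors), and a short computation shows $N_k$ has determinant $1$, trace $2$, and is not the identity, so as a M\"obius transformation of $\mathbb{CP}^1$ it is parabolic with unique fixed point $h(a_k)$. Since $h$ is equivariant, $h(s*a_k)=N_k\cdot h(s)$, and therefore $h(s*a_k)=h(a_k)$ iff $h(s)=N_k^{-1}\cdot h(a_k)=h(a_k)$, while $h(s)=h(s*a_k)$ iff $h(s)$ is fixed by $N_k$, i.e. iff $h(s)=h(a_k)$. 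Hence the three equalities coincide, and it suffices to produce a single region-coloring for which $h(s_R)\neq h(a_k)$ holds for every region $R$ and every arc $k$ bounding it.

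For the construction I would parametrize region-colorings by the color of one fixed region. Choose a region $R_0$; as recalled in the setup, assigning $t:=s_{R_0}\in\mathcal{P}$ determines all the other region-colors. Composing the operations $*a_j^{\pm1}$ along a path in $D$ from $R_0$ to a region $R$ produces a matrix $P_R\in{\rm PSL}(2,\mathbb{C})$ --- independent of the chosen path, by consistency of the region-coloring rule --- with $s_R=P_R\cdot t$, hence $h(s_R)=P_R\cdot h(t)$ by equivariance of $h$. Thus, for a fixed pair consisting of a region $R$ and an arc $k$ bounding it, the equality $h(s_R)=h(a_k)$ is equivalent to $h(t)=P_R^{-1}\cdot h(a_k)$, i.e. it excludes exactly one value of $h(t)\in\mathbb{CP}^1$. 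Since $D$ has only finitely many such pairs, only finitely many values of $h(t)$ are forbidden; choosing any remaining value $z_0\in\mathbb{CP}^1$ and any $t\in\mathcal{P}$ with $h(t)=z_0$ (possible since $h$ is onto) gives the desired region-coloring.

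The parts I regard as routine are the $2\times 2$ computation that $N_k$ is parabolic with fixed point $h(a_k)$ and the well-definedness of $P_R$, the latter being exactly the already-used fact that a single region-color extends to a consistent region-coloring. I do not expect a serious obstacle: once the first-paragraph reduction is in place, the lemma pits finitely many bad points against the infinite set $\mathbb{CP}^1$. The one degeneracy worth noting is that $a_k$ cannot be trivial, since $\mathcal{P}$ consists of non-trivial parabolic elements, so $N_k$ is genuinely parabolic with a well-defined single fixed point.
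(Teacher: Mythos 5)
Your proof is correct and rests on the same two ingredients as the paper's: the unique fixed point $h(a_k)$ of the parabolic M\"obius transformation $\widehat{a_k}$, and the fact that only finitely many choices of the free region-color parameter are forbidden while $\mathbb{CP}^1$ is infinite. The only difference is organizational --- you first collapse all three inequalities to $h(s)\neq h(a_k)$ and then count, whereas the paper first imposes the (slightly stronger) global disjointness $\{h(s_i)\}\cap\{h(a_j)\}=\emptyset$ by counting and then uses the fixed-point argument for the remaining inequality $h(s)\neq h(s*a_k)$.
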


\begin{proof} Although it was already proved in Proposition 2 of \cite{Kabaya14} and Lemma 2.4 of \cite{Cho14a}, 
we write down the proof again for the reader's convenience.

For the given arc-colors $a_1,\ldots,a_n$, we choose region-colors $s_1,\ldots,s_m$ so that
\begin{equation}\label{exi}
  \{h(s_1),\ldots,h(s_m)\}\cap\{h(a_1),\ldots,h(a_n)\}=\emptyset.
\end{equation}
This is always possible because, the number of $h(s_1)$ satisfying $h(s_1)\in\{h(a_1),\ldots,h(a_n)\}$ is finite, and $h(s_2),\ldots,h(s_m)$
are uniquely determined by $h(s_1)$. Therefore, the number of $h(s_1)$ satisfying
\begin{equation*}
  \{h(s_1),\ldots,h(s_m)\}\cap\{h(a_1),\ldots,h(a_n)\}\neq\emptyset
\end{equation*}
 is finite, but we have infinitely many choice of the value $h(s_1)\in\mathbb{CP}^1$.
 
Now consider Figure \ref{fig03} and assume
$h(s*a_k)=h(s)$. Then we obtain
\begin{equation}\label{eqnh}
  h(s*a_k)=\widehat{a_k}(h(s))=h(s),
\end{equation}
where $\widehat{a_k}:\mathbb{CP}^1\rightarrow\mathbb{CP}^1$ is the M\"{o}bius transformation
\begin{equation}\label{mob}
\widehat{a_k}(z)=\frac{(1+\alpha_k\beta_k)z-\alpha_k^2}{\beta_k^2 z+(1-\alpha_k\beta_k)}
\end{equation}
of $a_k=\left(\begin{array}{c}\alpha_k \\\beta_k\end{array}\right)$. Then (\ref{eqnh}) implies
$h(s)$ is the fixed point of $\widehat{a_k}$, which means $h(a_k)=h(s)$ that contradicts (\ref{exi}).

\end{proof}

{We remark that Lemma \ref{lem} holds for any choice of region-colors with only finitely many exceptions. 
Therefore, if we want to find a region-color explicitly, 
we choose $h(s_1)\notin\{h(a_1),\ldots,h(a_n)\}$ and then decide $h(s_2), \ldots, h(s_m)$ using 
$$h(s_1*a)=\widehat{a}(h(s_1)),~h(s_1*^{-1} a)=\widehat{a}^{-1}(h(s_1)).$$
If this choice does not satisfy Lemma \ref{lem}, then we change $h(s_1)$ and do the same process again.
This process is very simple and it ends in finite steps. If proper $h(s_1)$ is chosen, then we can easily 
extend it to $s_1\in\mathcal{P}$ and decide proper region-coloring $\{s_1,\ldots,s_m\}$.}

The arc-coloring induced by $\rho$ together with the region-coloring satisfying Lemma \ref{lem}
is called {\it the shadow-coloring induced by} $\rho$. 
We choose $p\in\mathcal{P}$ so that 
\begin{equation}\label{p}
h(p)\notin\{h(a_1),\ldots,h(a_n), h(s_1),\ldots,h(s_m)\}.
\end{equation}
The geometric shape of the five-term triangulation will be determined by the shadow-coloring induced by $\rho$ and $p$ 
in the following section.

From now on, we fix the representatives of shadow-colors in $\mathbb{C}^2\backslash\{0\}$.
As mentioned in \cite{Cho14a}, the representatives of some arc-colors may satisfy (\ref{ope}) up to sign, 
in other words, $a_m=\pm (a_l*a_k)$ {in Figure \ref{fig02}}. 
However, the representatives of the region-colors are uniquely determined due to the fact $s*(\pm a)=s*a$
for any region-color $s$ and any arc-color $a$.

For $a=\left(\begin{array}{c}\alpha_1 \\\alpha_2\end{array}\right)$ and $
b=\left(\begin{array}{c}\beta_1 \\\beta_2\end{array}\right)$ in $\mathbb{C}^2\backslash\{0\}$,
we define {\it determinant} $\det(a,b)$ by
\begin{equation*}
  \det(a,b):=\det\left(\begin{array}{cc}\alpha_1 & \beta_1 \\\alpha_2 & \beta_2\end{array}\right)=\alpha_1 \beta_2-\beta_1 \alpha_2.
\end{equation*}
Then it satisfies
$\det(a*c,b*c)=\det(a,b)$
for any $a,b,c\in\mathbb{C}^2\backslash\{0\}$. Furthermore, for $v_0,\ldots,v_3\in\mathbb{C}^2\backslash \{0\}$,
the cross-ratio can be expressed using the determinant by
$$[h(v_0),h(v_1),h(v_2),h(v_3)]=\frac{\det(v_0,v_3)\det(v_1,v_2)}{\det(v_0,v_2)\det(v_1,v_3)}.$$
(For the proof of these, see {Lemma 2.9} of \cite{Cho14a}.)

\subsection{Geometric shape of the five-term triangulation}\label{sec22}

The five-term triangulation is obtained by placing octahedra on each crossings and subdivide them into five tetrahedra.
(See Section 3 of \cite{Cho13c} for exact description.)

Consider the crossing in Figure \ref{fig04} with the shadow-coloring induced by $\rho$, and let $w_a,\ldots,w_d$
be the variables assigned to regions of $D$. 

\begin{figure}[h]
\centering
\subfigure[Positive crossing]{\begin{picture}(6,5.5)  
  \setlength{\unitlength}{0.8cm}\thicklines
    \put(6,5){\vector(-1,-1){4}}
    \put(2,5){\line(1,-1){1.8}}
    \put(4.2,2.8){\vector(1,-1){1.8}}
    \put(6.2,5.2){$a_k$}
    \put(1.3,0.8){$a_k$}
    \put(1.3,5.2){$a_l$}
    \put(6.2,0.8){$a_l*a_k$}
    \put(3.5,4.7){$s*a_l$}
    \put(3.9,4.2){$w_c$}
    \put(2,3){$s$}
    \put(1.9,2.5){$w_d$}
    \put(5.5,3){\small $(s*a_l)*a_k$}
    \put(6.5,2.5){$w_b$}
    \put(3.5,1){$s*a_k$}
    \put(3.9,0.5){$w_a$}
  \end{picture}}
\subfigure[Negative crossing]{\begin{picture}(6,5.5)  
  \setlength{\unitlength}{0.8cm}\thicklines
    \put(6,5){\vector(-1,-1){4}}
    \put(3.8,3.2){\vector(-1,1){1.8}}
    \put(4.2,2.8){\line(1,-1){1.8}}
    \put(6.2,5.2){$a_k$}
    \put(1.3,0.8){$a_k$}
    \put(1.3,5.2){$a_l$}
    \put(6.2,0.8){$a_l*a_k$}
    \put(4,4.7){$s$}
    \put(3.9,4.2){$w_d$}
    \put(1.2,3){$s*a_l$}
    \put(1.5,2.5){$w_a$}
    \put(5.5,3){$s*a_k$}
    \put(5.8,2.5){$w_c$}
    \put(3,1){\small $(s*a_l)*a_k$}
    \put(3.9,0.5){$w_b$}
  \end{picture}}
 \caption{Crossing with shadow-coloring and region variables}\label{fig04}
\end{figure}
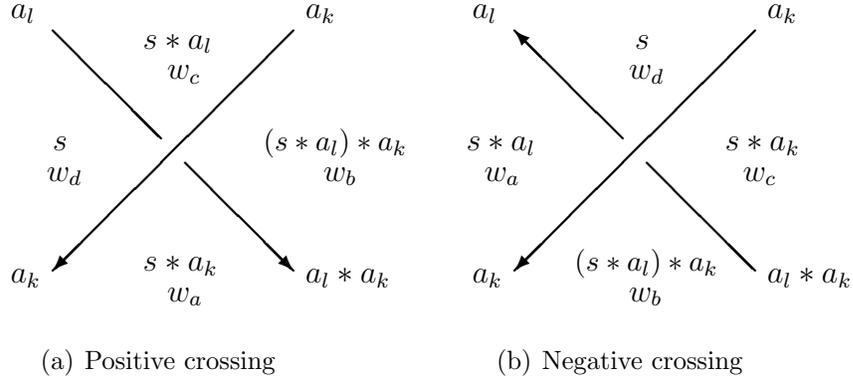

We place tetrahedra at each crossings of $D$ and assign coordinates as in Figure \ref{fig05}
so as to make them hyperbolic ideal ones.
As a matter of fact, Figure \ref{fig05} is the same with Figure 11 of \cite{Cho14a} without orientations, 
which was used only for a degenerate crossing.\footnote{
A crossing is called {\it degenerate} when $h(a_k)=h(a_l)$ holds 
for the two arcs of the crossing with arc-colors $a_k$ and $a_l$.}
Interestingly, this subdivision is good enough for our purpose whether the crossing is degenerate or not.

\begin{figure}[h]
\centering
\subfigure[Positive crossing]{\includegraphics[scale=0.8]{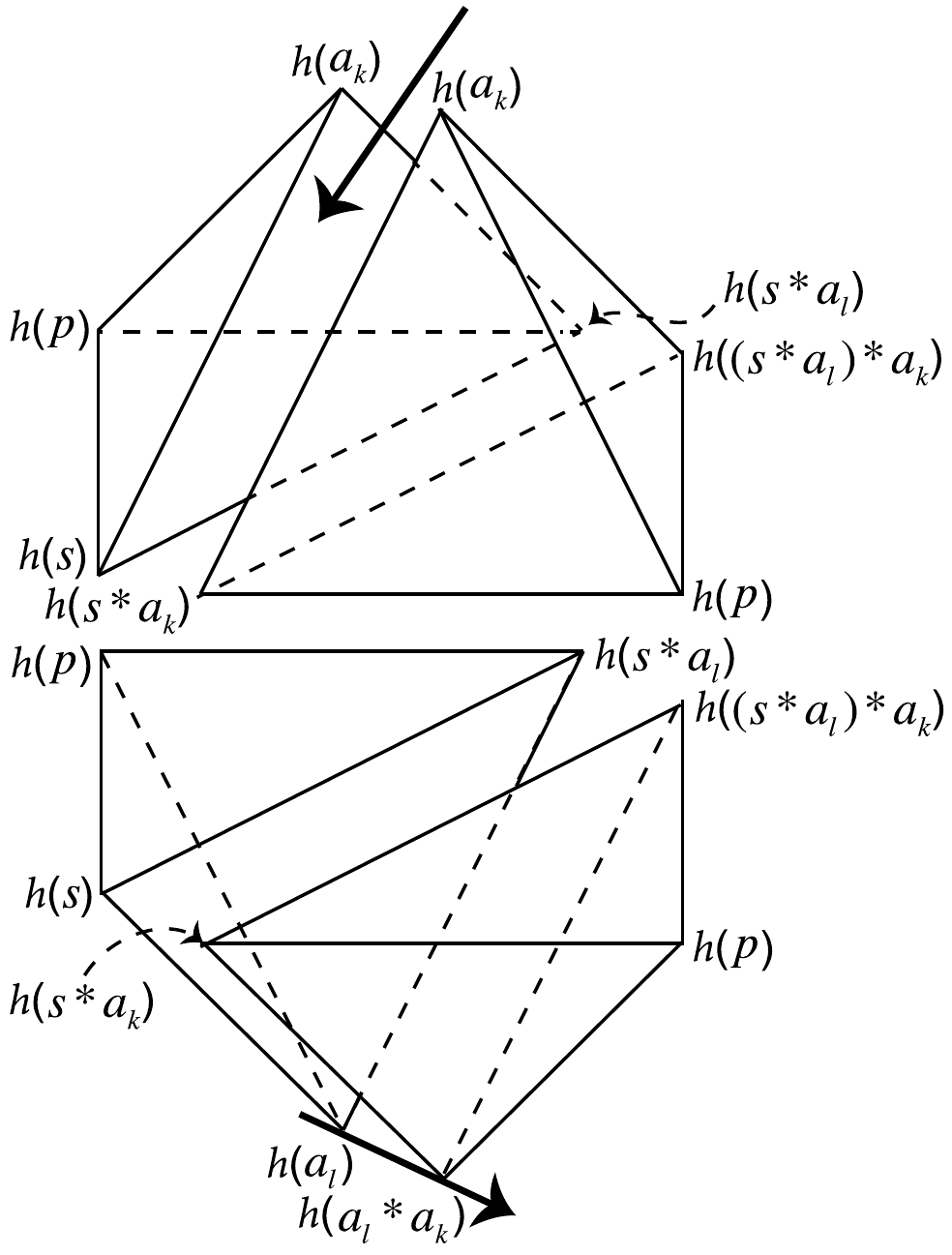}}
\subfigure[Negative crossing]{\includegraphics[scale=0.8]{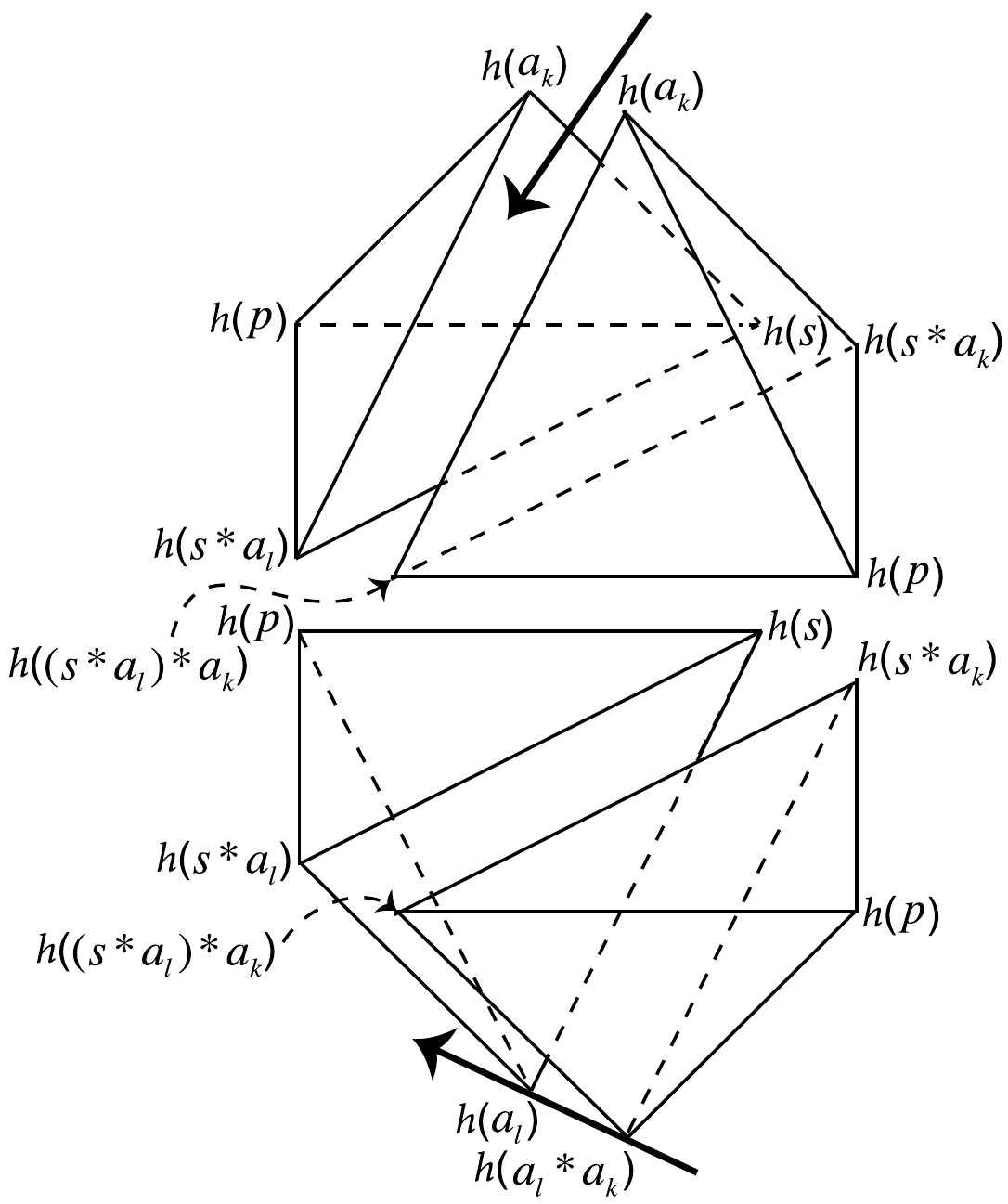}}
 \caption{Cooridnates of tetrahedra at the crossing in Figure \ref{fig04}}\label{fig05}
\end{figure}

\begin{lem}\label{lem2}
All the tetrahedra in Figure \ref{fig05} are non-degenerate.
\end{lem}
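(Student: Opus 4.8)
The statement I need to prove is Lemma~\ref{lem2}: all the tetrahedra in Figure~\ref{fig05} are non-degenerate. Since each tetrahedron is an ideal hyperbolic tetrahedron specified by four vertices on $\mathbb{CP}^1$, non-degeneracy is equivalent to the statement that its shape parameter (cross-ratio of the four vertices) lies in $\mathbb{C}\setminus\{0,1\}$, i.e., the four vertices are pairwise distinct. So the whole lemma reduces to checking, for each of the five tetrahedra at a positive crossing and each of the five at a negative crossing, that no two of its four vertex-labels coincide under the Hopf map $h$.

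\emph{Setup.} First I would read off, from Figure~\ref{fig05}, the explicit vertex assignments of each of the five tetrahedra at a crossing. The vertices will be among the shadow-colors appearing around the crossing, namely the arc-colors $a_k$, $a_l$, $a_l*a_k$, the region-colors $s$, $s*a_k$, $s*a_l$, $(s*a_l)*a_k$, and the auxiliary element $p$ from~(\ref{p}). Using the quandle identity $(s*a_l)*a_k=(s*a_k)*(a_l*a_k)$ one can keep the bookkeeping consistent between the two descriptions in Figure~\ref{fig04}. I would then tabulate, for each tetrahedron, the six pairs of its four vertices that need to be checked.

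\emph{The core estimates.} Non-degeneracy of each pair then follows from a small menu of facts, all of which are available from Lemma~\ref{lem} and~(\ref{p}):
\begin{enumerate}
\item $h(p)$ is distinct from $h$ of every arc-color and every region-color, by~(\ref{p}); this disposes of every pair involving $p$.
\item For an arc-color $a$ and an adjacent region-color $s$, $h(a)\neq h(s)$ and $h(a)\neq h(s*a)$ and $h(s)\neq h(s*a)$, by Lemma~\ref{lem}; this is exactly the triple inequality $h(a_k)\neq h(s)\neq h(s*a_k)\neq h(a_k)$ and its analogue for the other arc at the crossing.
\item Two region-colors $s$ and $s*a$ on opposite sides of an arc with color $a$ are distinct as elements of $\mathbb{C}^2\setminus\{0\}$: $s*a=s$ would force $s$ to be fixed by the parabolic $a$, hence $h(s)=h(a)$, contradicting the region-coloring choice~(\ref{exi}). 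The same argument handles $s*a_k$ vs.\ $(s*a_l)*a_k$ (their difference is a $*a_k$-image of $s$ vs.\ $s*a_l$, and $\det$ is $*$-invariant), and $s$ vs.\ $(s*a_l)*a_k$, etc.
\item For the two distinct arc-colors $a_k$ and $a_l$ at a crossing one does \emph{not} in general have $h(a_k)\neq h(a_l)$ — that is precisely the degenerate-crossing case. So I must make sure that no tetrahedron in Figure~\ref{fig05} has both $a_k$ and $a_l$ (or $a_k$ and $a_l*a_k$) among its four vertices; this is guaranteed by the particular subdivision, which was designed in \cite{Cho14a} exactly to remain valid at degenerate crossings. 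Verifying this is the one genuinely combinatorial point.
\end{enumerate}

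\emph{Carrying it out and the main obstacle.} Concretely, I would go tetrahedron by tetrahedron: list its four vertices, match each of its six vertex-pairs against one of the four items above, and conclude pairwise distinctness, hence a well-defined non-degenerate shape parameter. By the left-right (positive/negative crossing) symmetry and the up-down symmetry of Figure~\ref{fig05}, the ten tetrahedra reduce to only a handful of genuinely different cases. The main obstacle is item~(4): one must be certain that the subdivision never places two arc-colors of the same crossing on a single tetrahedron, because at a degenerate crossing those are the only vertex-labels whose $h$-values are allowed to coincide. Once that structural fact is confirmed from Figure~\ref{fig05}, every remaining pair is killed by Lemma~\ref{lem}, by the parabolic-fixed-point argument for region-colors, or by the choice of $p$ in~(\ref{p}), and the lemma follows.
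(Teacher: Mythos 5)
Your proposal is correct and follows essentially the same route as the paper: the paper's (one-line) proof likewise observes that every edge of every tetrahedron in Figure~\ref{fig05} either has $h(p)$ as an endpoint (handled by the choice~(\ref{p}) of $p$) or joins two members of an adjacent triple $(a_k,s,s*a_k)$ as in Figure~\ref{fig03} (handled by Lemma~\ref{lem}). Your items (3) and (4) are just a more explicit unpacking of that adjacency claim — in particular your observation that no tetrahedron may contain both arc-colors of a (possibly degenerate) crossing is exactly what the paper's assertion that ``all endpoints of edges are adjacent'' implicitly encodes.
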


\begin{proof} 
It is trivial because the shadow-coloring we are considering satisfies Lemma \ref{lem},
and all endpoints of edges are adjacent, as $h(a_k), h(s), h(s*a_k)$ in Figure \ref{fig03}, or one of them is $h(p)$.

\end{proof}

According to Yoshida's construction in Section 4.5 of \cite{Tillmann13},
the shape of the triangulation according to the coordinates in Figure \ref{fig05} determines a boundary-parabolic representation
$\rho':\pi_1(L)\rightarrow{\rm PSL}(2,\mathbb{C})$. However, $\rho'$ equals to $\rho$ up to conjugate because,
due to the Poincar\'{e} polyhedron theorem, $\pi_1(L)$ is generated by face-pairing maps. In Figure \ref{fig05},
the face-pairing maps are the isomorphisms induced by M\"{o}bius transformations of ${a_1},\ldots,a_n\in{\rm PSL}(2,\mathbb{C})$. 
Therefore, two representations $\rho$ and $\rho'$
send generators to the same elements $a_1,\ldots,a_n$, which means $\rho=\rho'$ up to conjugate.

To make the five-term triangulation, we glue the face $\{h(a_k), h(s), h(s*a_l)\}$ to $\{h(a_k), h(s*a_k), h((s*a_l)*a_k)\}$
by sending the tetrahedron $\{h(a_k), h(s), h(s*a_l), h(p)\}$ by the isomorphism induced by ${a}_k$.
After applying 2-3 move along the edge $\{h(p*a_k),h(p)\}$, we obtain Figure \ref{fig06}.
Here we assigned the vertex-orientation according to Figure 9 of \cite{Cho13c}.

\begin{figure}[h]
\centering
\subfigure[Positive crossing]{\includegraphics[scale=0.8]{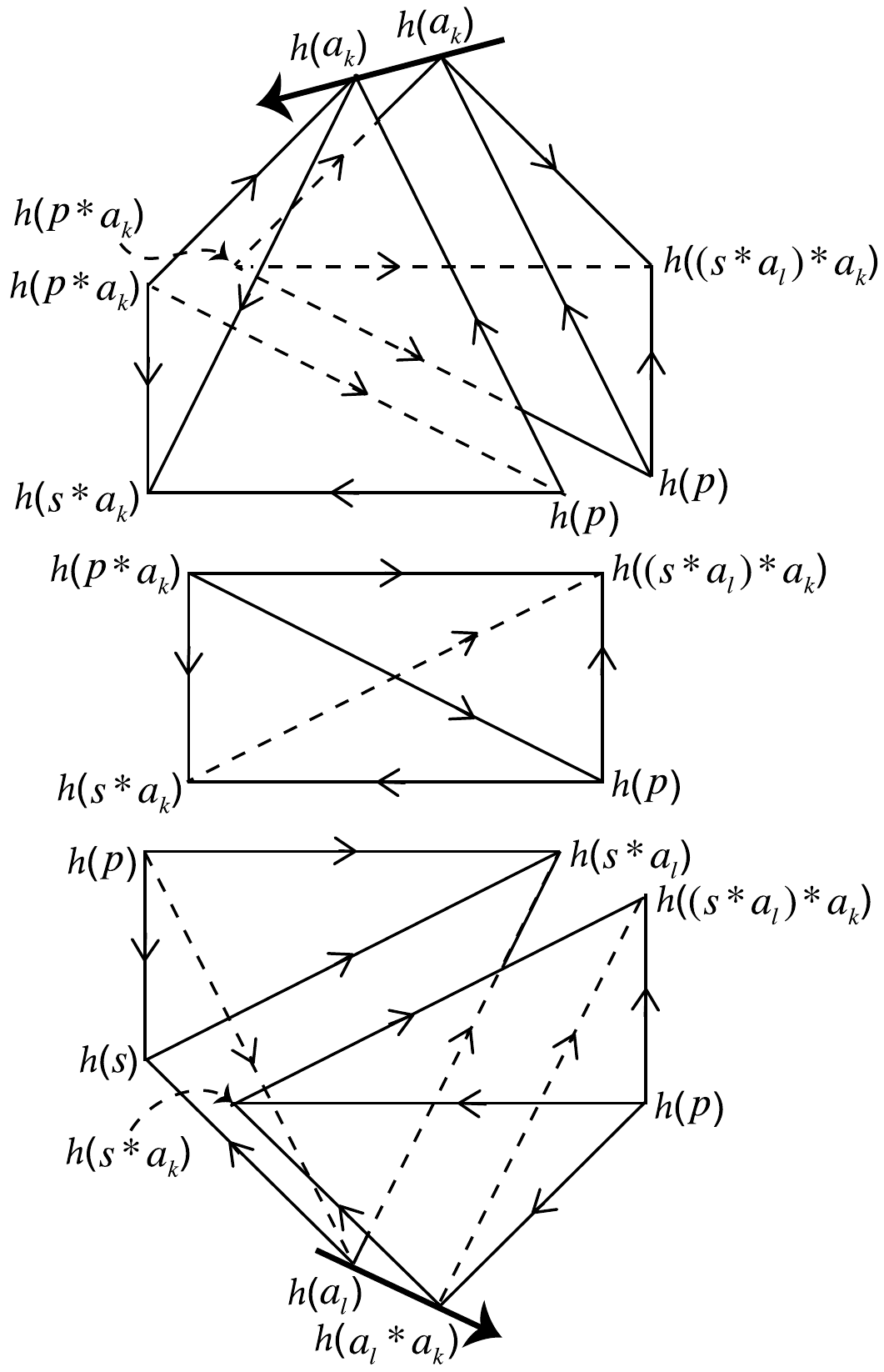}}
\subfigure[Negative crossing]{\includegraphics[scale=0.8]{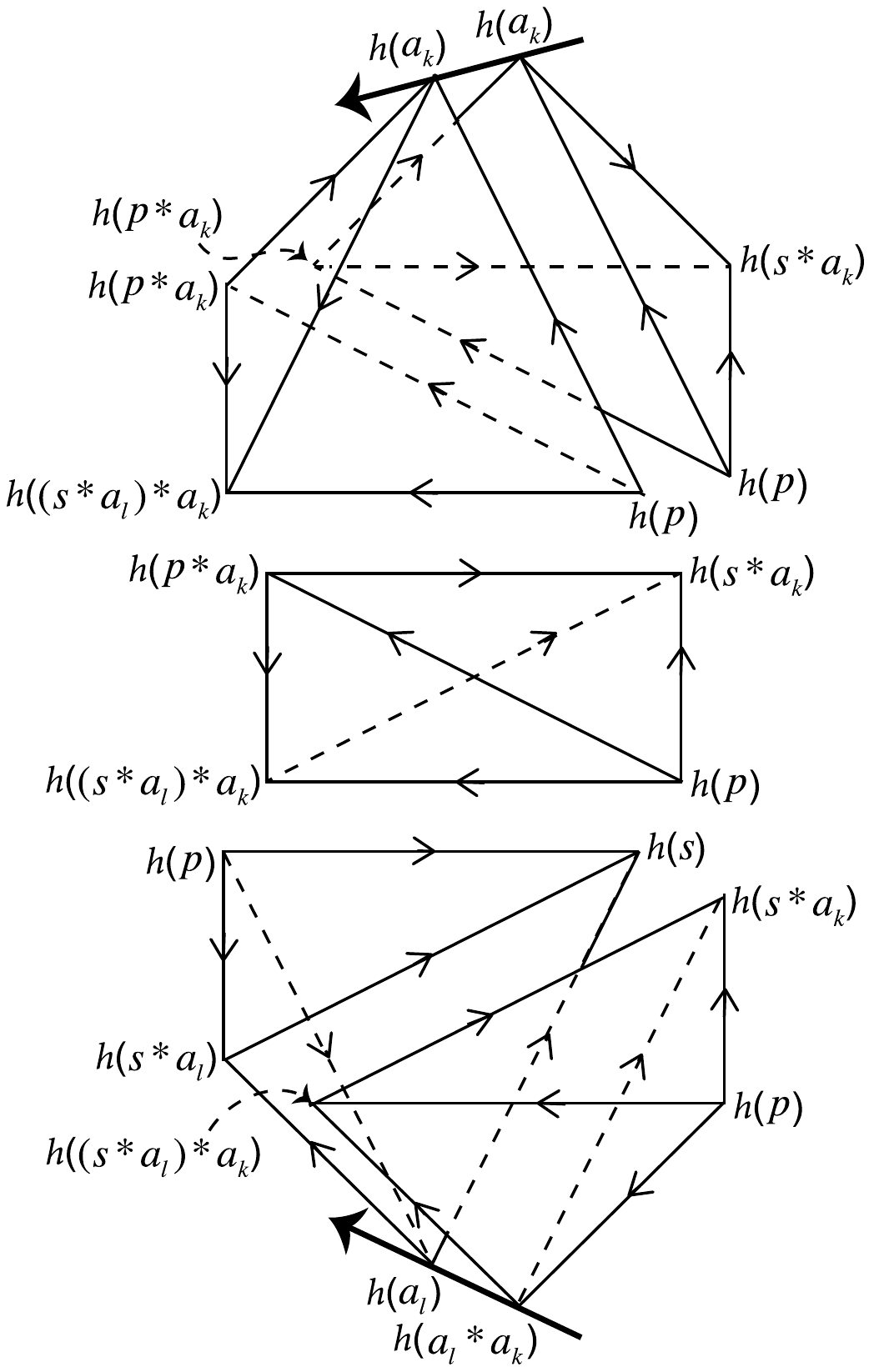}}
 \caption{Five-term triangulation at the crossing in Figure \ref{fig04}}\label{fig06}
\end{figure}

\begin{pro}\label{pro} All the tetrahedra in Figure \ref{fig06} are non-degenerate.
\end{pro}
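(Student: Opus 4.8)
The plan is to reduce the proposition to a count of distinct ideal vertices and then to isolate the single new feature of Figure \ref{fig06} relative to Figure \ref{fig05}. Recall that an ideal tetrahedron with vertices on $\mathbb{CP}^1$ is non-degenerate precisely when its four vertices are pairwise distinct, equivalently when all six of its edges have distinct endpoints; this is the criterion already used in the proof of Lemma \ref{lem2}. Figure \ref{fig06} arises from Figure \ref{fig05} by the 2-3 move along the edge $\{h(p*a_k),h(p)\}$, carried out on the tetrahedron $\{h(a_k),h(s),h(s*a_l),h(p)\}$ and its image $\{h(a_k),h(s*a_k),h((s*a_l)*a_k),h(p*a_k)\}$ under the M\"{o}bius transformation $\widehat{a_k}$ of (\ref{mob}), which share (after the face identification) the triangle $\{h(a_k),h(s),h(s*a_l)\}\equiv\{h(a_k),h(s*a_k),h((s*a_l)*a_k)\}$. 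The combinatorial input I would use is that a 2-3 move introduces exactly one new edge --- here $\{h(p*a_k),h(p)\}$ --- and that every other edge of the three new tetrahedra is already an edge of one of the two tetrahedra it replaces.

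With this in hand, every edge of every tetrahedron in Figure \ref{fig06} other than $\{h(p*a_k),h(p)\}$ already occurs in Figure \ref{fig05} and hence has distinct endpoints by Lemma \ref{lem2} (the same remark also covers any tetrahedron of Figure \ref{fig06} carried over unchanged). It remains to treat the new edge: since $h(p*a_k)=\widehat{a_k}(h(p))$ and $\widehat{a_k}$ is parabolic with unique fixed point $h(a_k)$ --- exactly the computation made around (\ref{eqnh}) in the proof of Lemma \ref{lem} --- the assumption $h(p)\neq h(a_k)$ from (\ref{p}) gives $\widehat{a_k}(h(p))\neq h(p)$, i.e.\ $h(p*a_k)\neq h(p)$. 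Thus all four-vertex sets are distinct and every tetrahedron in Figure \ref{fig06} is non-degenerate.

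The step I expect to require the most care --- and the only genuine obstacle --- is the bookkeeping around the 2-3 move: a vertex of the triangle shared by the two tetrahedra being replaced has two a priori distinct realizations in $\mathbb{CP}^1$, one arising from $\{h(a_k),h(s),h(s*a_l)\}$ and one from its $\widehat{a_k}$-image, and one must check that each realization remains distinct from both endpoints of the new edge. This dissolves as soon as one notes that $\widehat{a_k}$ is a bijection of $\mathbb{CP}^1$: the second realization of such a vertex is the $\widehat{a_k}$-image of a point which, by Lemma \ref{lem2}, was already different from $h(p)$, hence is different from $\widehat{a_k}(h(p))=h(p*a_k)$; distinctness from $h(p)$ itself is immediate from (\ref{p}). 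After this remark the argument is purely formal, resting entirely on Lemma \ref{lem2} and the condition (\ref{p}) on $p$.
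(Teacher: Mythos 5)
Your proof is correct and follows essentially the same route as the paper's: reduce to checking that every edge except the new one $\{h(p*a_k),h(p)\}$ already appears in Lemma \ref{lem2}, then rule out $h(p*a_k)=h(p)$ by noting that this would force $h(p)$ to be the fixed point $h(a_k)$ of the parabolic $\widehat{a_k}$, contradicting (\ref{p}). Your extra bookkeeping about the two realizations of the shared-triangle vertices is a harmless elaboration of what the paper states in one line.
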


\begin{proof} All the edges of the tetrahedra were already appeared in Lemm \ref{lem2} expect $\{h(p*a_k),h(p)\}$,
so it is enough to show that $h(p*a_k)\neq h(a_k)$.

Assume $h(p*a_k)=h(a_k)$. Then 
\begin{equation}\label{eqnh}
  h(p*a_k)=\widehat{a_k}(h(p))=h(p),
\end{equation}
where $\widehat{a_k}:\mathbb{CP}^1\rightarrow\mathbb{CP}^1$ is the M\"{o}bius transformation
of $a_k$ defined in (\ref{mob}). Then (\ref{eqnh}) implies
$h(p)$ is the fixed point of $\widehat{a_k}$, which means $h(a_k)=h(p)$. This contradicts the definition (\ref{p}) of $p$.

\end{proof}

\subsection{Formula of the solution ${\bold w}^{(0)}$}\label{sec23}
Consider the crossing in Figure \ref{fig04} and the tetrahedra in Figure \ref{fig06}. For the positive crossing,
we assign shape parameters to the edges as follows:
\begin{itemize} 
\item $\displaystyle\frac{w_d}{w_a}$ to $(h(a_k), h(s*a_k))$ of $(h(p*a_k),h(p),h(a_k), h(s*a_k))$, 
\item $\displaystyle\frac{w_b}{w_c}$ to $(h(a_k), h((s*a_l)*a_k))$ of $-(h(p*a_k),h(p),h(a_k), h((s*a_l)*a_k))$, 
\item $\displaystyle\frac{w_b}{w_a}$ to $(h(p), h(a_l*a_k))$ of $(h(p), h(a_l*a_k),h(s*a_k), h((s*a_l)*a_k))$ and 
\item $\displaystyle\frac{w_d}{w_c}$ to $(h(p), h(a_l))$ of $-(h(p), h(a_l),h(s), h(s*a_l))$, respectively.
\end{itemize}
On the other hand, for the negative crossing,
we assign shape parameters to the edges as follows:
\begin{itemize} 
\item $\displaystyle\frac{w_a}{w_b}$ to $(h(a_k), h((s*a_l)*a_k))$ of $-(h(p),h(p*a_k),h(a_k), h((s*a_l)*a_k))$, 
\item $\displaystyle\frac{w_c}{w_d}$ to $(h(a_k), h(s*a_k))$ of $(h(p),h(p*a_k),h(a_k), h(s*a_k))$, 
\item $\displaystyle\frac{w_c}{w_b}$ to $(h(p), h(a_l*a_k))$ of $(h(p), h(a_l*a_k), h((s*a_l)*a_k),h(s*a_k))$ and 
\item $\displaystyle\frac{w_a}{w_d}$ to $(h(p), h(a_l))$ of $-(h(p), h(a_l),h(s*a_l), h(s))$, respectively.
\end{itemize}
According to Proposition 1.1 of \cite{Cho13c}, $\mathcal{I}$ becomes the set of hyperbolicity equations 
of the five-term triangulation with the above shape parameters.

For a region of $D$ with region-color $s_k$ and region-variable $w_k$, we define
\begin{equation}\label{main}
w_k^{(0)}:=\det(p,s_k).
\end{equation}
Then, by the definition of $p$, we know $w_k^{(0)}\not= 0$. Furthermore, direct calculation shows 
${\bold w^{(0)}}=(w_1^{(0)},\ldots,w_n^{(0)})$ is a solution of $\mathcal{I}$. 
Specifically, for the first two cases of the positive crossing, the shape parameters
assigned to edges $(h(a_k), h(s*a_k))$ and $(h(a_k), h((s*a_l)*a_k))$ are the cross-ratios
\begin{eqnarray*}
[h(p*a_k),h(p),h(a_k), h(s*a_k)]&=&\frac{\det(p*a_k,s*a_k)\det(p,a_k)}{\det(p*a_k,a_k)\det(p,s*a_k)}\\
&=&\frac{\det(p,s)\det(p,a_k)}{\det(p,a_k)\det(p,s*a_k)}=\frac{w_d^{(0)}}{w_a^{(0)}},\\
{[h}(p*a_k),h(p),h(a_k), h((s*a_l)*a_k)]^{-1}
&=&\frac{\det(p*a_k,a_k)\det(p,(s*a_l)*a_k)}{\det(p*a_k,(s*a_l)*a_k)\det(p,a_k)}\\
&=&\frac{\det(p,a_k)\det(p,(s*a_l)*a_k)}{\det(p,s*a_l)\det(p,a_k)}=\frac{w_b^{(0)}}{w_c^{(0)}},
\end{eqnarray*}
respectively, and all the other cases can be verified by the same way. 
The proof of Theorem \ref{thm1} follows from the above and the discussion below Lemma \ref{lem2}.
 
\section{Examples}

We consider the same examples in Section 4 of \cite{Cho14a}.
\subsection{Figure-eight knot $4_1$}\label{sec31}

\begin{figure}[h]\centering
\includegraphics[scale=0.5]{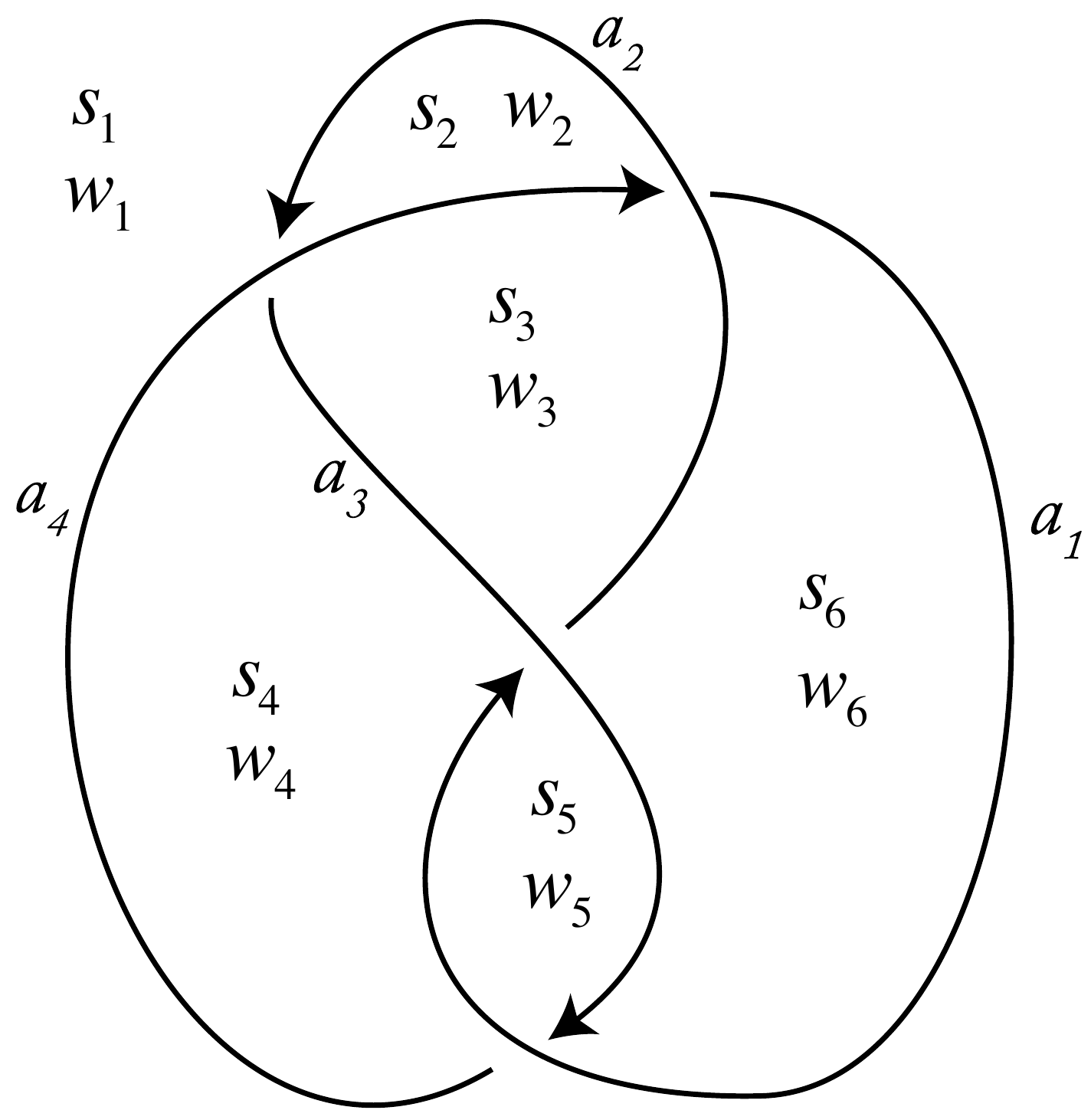}
\caption{Figure-eight knot $4_1$ with parameters}\label{example1}
\end{figure}

For the figure-eight knot diagram in Figure \ref{example1}, 
let the representatives of the shadow-coloring be 
\begin{align*}
a_1=\left(\begin{array}{c}0 \\t\end{array}\right),~a_2=\left(\begin{array}{c}1 \\0\end{array}\right),~
a_3=\left(\begin{array}{c}-t \\1+t\end{array}\right),~a_4=\left(\begin{array}{c}-t \\t\end{array}\right),\\
s_1=\left(\begin{array}{c}1 \\1\end{array}\right),~s_2=\left(\begin{array}{c}0 \\1\end{array}\right),~
s_3=\left(\begin{array}{c}-t-1 \\t+2\end{array}\right),~s_4=\left(\begin{array}{c}-2t-1 \\2t+3\end{array}\right),\\
s_5=\left(\begin{array}{c}-2t-1 \\t+4\end{array}\right),~s_6=\left(\begin{array}{c}1 \\t+2\end{array}\right),~
p=\left(\begin{array}{c}2 \\1\end{array}\right),
\end{align*}
where $t$ is a solution of $t^2+t+1=0$, and
let $\rho:\pi_1(4_1)\rightarrow{\rm PSL}(2,\mathbb{C})$ be the boundary-parabolic representation determined by $a_1,\ldots,a_4$.

The potential function $W(w_1,\ldots,w_6)$ of Figure \ref{example1} is
\begin{align*}
W=\left\{\li(\frac{w_1}{w_2})+\li(\frac{w_1}{w_4})-\li(\frac{w_1 w_3}{w_2 w_4})
-\li(\frac{w_2}{w_3})-\li(\frac{w_4}{w_3})+\frac{\pi^2}{6}-\log\frac{w_2}{w_3}\log\frac{w_4}{w_3}\right\}\\
+\left\{\li(\frac{w_3}{w_2})+\li(\frac{w_3}{w_6})-\li(\frac{w_1 w_3}{w_2 w_6})
-\li(\frac{w_2}{w_1})-\li(\frac{w_6}{w_1})+\frac{\pi^2}{6}-\log\frac{w_2}{w_1}\log\frac{w_6}{w_1}\right\}\\
+\left\{-\li(\frac{w_4}{w_3})-\li(\frac{w_4}{w_5})+\li(\frac{w_4 w_6}{w_3 w_5})
+\li(\frac{w_3}{w_6})+\li(\frac{w_5}{w_6})-\frac{\pi^2}{6}+\log\frac{w_3}{w_6}\log\frac{w_5}{w_6}\right\}\\
+\left\{-\li(\frac{w_6}{w_1})-\li(\frac{w_6}{w_5})+\li(\frac{w_4 w_6}{w_1 w_5})
+\li(\frac{w_1}{w_4})+\li(\frac{w_5}{w_4})-\frac{\pi^2}{6}+\log\frac{w_1}{w_4}\log\frac{w_5}{w_4}\right\}.
\end{align*} 

Applying (\ref{main}), we obtain
\begin{align*}
w_1^{(0)}=\det(p,s_1)=1,~w_2^{(0)}=\det(p,s_2)=2,~w_3^{(0)}=\det(p,s_3)=3t+5,\\
w_4^{(0)}=\det(p,s_4)=6t+7,~w_5^{(0)}=\det(p,s_5)=4t+9,~w_6^{(0)}=\det(p,s_6)=2t+3,
\end{align*}
and $(w_1^{(0)},\ldots,w_6^{(0)})$ becomes a solution of 
$\mathcal{I}=\{\exp(w_k\frac{\partial W}{\partial w_k})=1~|~k=1,\ldots,6\}$.
Applying (\ref{W1}), we obtain
\begin{equation*}
W_0(w_1^{(0)},\ldots,w_6^{(0)})\equiv i(\vol(\rho)+i\,\cs(\rho))\modulo,
\end{equation*}
and numerical calculation verifies it by
\begin{equation*}
W_0(w_1^{(0)},\ldots,w_6^{(0)})=
      \left\{\begin{array}{ll}i(2.0299...+0\,i)=i(\vol(4_1)+i\,\cs(4_1))&\text{ if }t=\frac{-1-\sqrt{3} \,i}{2}, \\
                i(-2.0299...+0\,i)=i(-\vol(4_1)+i\,\cs(4_1))&\text{ if }t=\frac{-1+\sqrt{3}\,i}{2}. \end{array}\right.
\end{equation*}

\subsection{Trefoil knot $3_1$}\label{sec32}

\begin{figure}[h]\centering
\includegraphics[scale=0.6]{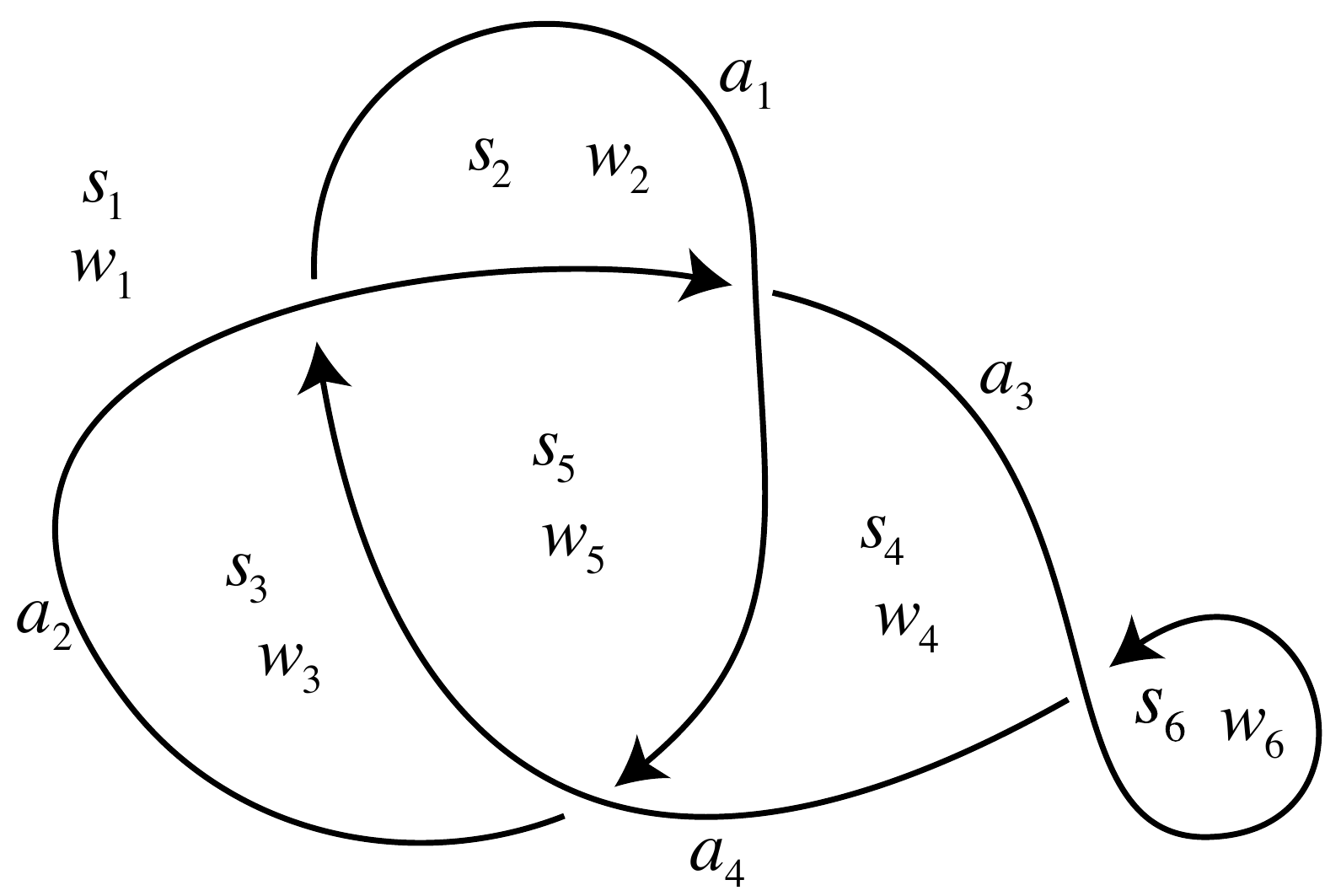}
\caption{Trefoil knot $3_1$ with parameters}\label{example2}
\end{figure}
For the trefoil knot diagram in Figure \ref{example2}, 
let the representatives of the shadow-coloring be 
\begin{align*}
a_1=\left(\begin{array}{c}1 \\0\end{array}\right),~a_2=\left(\begin{array}{c}0 \\1\end{array}\right),~
a_3=a_4=\left(\begin{array}{c}-1 \\1\end{array}\right),\\
s_1=\left(\begin{array}{c}-1 \\2\end{array}\right),~s_2=\left(\begin{array}{c}1 \\2\end{array}\right),~
s_3=\left(\begin{array}{c}-1 \\3\end{array}\right),~s_4=\left(\begin{array}{c}0 \\1\end{array}\right),\\
s_5=\left(\begin{array}{c}1 \\1\end{array}\right),s_6=\left(\begin{array}{c}-2 \\3\end{array}\right),
~p=\left(\begin{array}{c}2 \\1\end{array}\right),
\end{align*}
and let $\rho:\pi_1(3_1)\rightarrow{\rm PSL}(2,\mathbb{C})$
be the boundary-parabolic representation determined by $a_1,a_2,a_3, a_4$.

The potential function 
$W(w_1,\ldots, w_6)$ of Figure \ref{example2} is 
\begin{align*}
W=\left\{-\li(\frac{w_3}{w_1})-\li(\frac{w_3}{w_5})+\li(\frac{w_2 w_3}{w_1 w_5})
+\li(\frac{w_1}{w_2})+\li(\frac{w_5}{w_2})-\frac{\pi^2}{6}+\log\frac{w_1}{w_2}\log\frac{w_5}{w_2}\right\}\\
+\left\{-\li(\frac{w_2}{w_1})-\li(\frac{w_2}{w_5})+\li(\frac{w_2 w_4}{w_1 w_5})
+\li(\frac{w_1}{w_4})+\li(\frac{w_5}{w_4})-\frac{\pi^2}{6}+\log\frac{w_1}{w_4}\log\frac{w_5}{w_4}\right\}\\
+\left\{-\li(\frac{w_4}{w_1})-\li(\frac{w_4}{w_5})+\li(\frac{w_3 w_4}{w_1 w_5})
+\li(\frac{w_1}{w_3})+\li(\frac{w_5}{w_3})-\frac{\pi^2}{6}+\log\frac{w_1}{w_3}\log\frac{w_5}{w_3}\right\}\\
+\left\{\li(\frac{w_1}{w_4})+\li(\frac{w_1}{w_6})-\li(\frac{w_1^2}{w_4 w_6})
-\li(\frac{w_4}{w_1})-\li(\frac{w_6}{w_1})+\frac{\pi^2}{6}-\log\frac{w_4}{w_1}\log\frac{w_6}{w_1}\right\}.
\end{align*}

Applying (\ref{main}), we obtain
\begin{eqnarray*}
w_1^{(0)}=\det(p,s_1)=5,~w_2^{(0)}=\det(p,s_2)=3,~w_3^{(0)}=\det(p,s_3)=7,\\
w_4^{(0)}=\det(p,s_4)=2,~w_5^{(0)}=\det(p,s_5)=1,~w_6^{(0)}=\det(p,s_6)=8,
\end{eqnarray*}
and $(w_1^{(0)},\ldots,w_6^{(0)})$ becomes a solution of 
$\mathcal{I}=\{\exp(w_k\frac{\partial W}{\partial w_k})=1~|~k=1,\ldots,6\}$.
Applying (\ref{W1}), we obtain
\begin{equation*}
W_0(w_1^{(0)},\ldots,w_6^{(0)})\equiv i(\vol(\rho)+i\,\cs(\rho))\modulo,
\end{equation*}
and numerical calculation verifies it by
\begin{equation*}
{W}_0(w_1^{(0)},\ldots,w_6^{(0)})=i(0+1.6449...i),
\end{equation*}
where $\vol(3_1)=0$ holds trivially and $1.6449...=\frac{\pi^2}{6}$ holds numerically.

\vspace{5mm}
\begin{ack}
  The author appreciates Seonhwa Kim. He already predicted the existence of a solution  
  when the author defined the potential function of the colored Jones polynomial several years ago.
  {This research was supported by Basic Science Research Program through the National Research Foundation of Korea (NRF)
  funded by the Ministry of Education (2014047764).}
\end{ack}

\bibliography{VolConj}
\bibliographystyle{abbrv}

{
\begin{flushleft}
{  Pohang Mathematics Institute (PMI),\\ Pohang 790-784, Republic of Korea\\}
  \vspace{0.4cm}
E-mail: dol0425@gmail.com\\
\end{flushleft}}
\end{document}